\numberwithin{equation}{section}
\newtheorem{theorem}{Theorem}[section]
\newtheorem{lemma}{Lemma}[section]
\begin{document}
\begin{frontmatter}



\title{A new blowup criterion for strong solutions of a coupled periodic Camassa-Holm system
}


\author[ad1]{Yonghui Zhou\corref{cor}}
\ead{zhouyhmath@163.com}
\author[ad2]{Xiaowan Li}
\ead{xiaowan0207@163.com}

\address[ad1]{School of Mathematics, Hexi University, Zhangye 734000, P.R. China}
\address[ad2]{College of Mathematics and System Sciences, Xinjiang University, Urumqi, 830046, P.R. China}

\cortext[cor]{Corresponding author.}

\begin{abstract}
This paper is concerned with the wave breaking phenomena for a coupled periodic Camassa-Holm system. We establish a new blowup criterion for strong solutions by the method of characteristic and convolution estimates, and also give the existence interval of the blowup point.

\end{abstract}

\begin{keyword} Shallow water wave; Wave breaking; Camassa-Holm system.

\end{keyword}

\end{frontmatter}


\section{Introduction}
In this paper, we consider the following Cauchy problem of a coupled periodic Camassa-Holm system
\begin{equation}
\begin{cases}
m_{t}+2m u_{x}+m_{x}u+(mv)_{x}+n v_{x}=0,\\
n_{t}+2n v_{x}+n_{x}v+(nu)_{x}+m u_{x}=0,
\end{cases}
\label{101}
\end{equation}
subject to the initial data
\begin{equation}
\begin{cases}
u(0,x)=u_{0}(x),
v(0,x)=v_{0}(x),\\
u(t,x+1)=u(t,x),
v(t,x+1)=v(t,x),
\end{cases}
\label{101a}
\end{equation}
where $m=u-u_{xx}$ and $n=v-v_{xx}$. Such a model was firstly studied by Fu et al. \cite{Fu2009,Fu2010} as a multi-component generalization of the Camassa-Holm equation with peakons in the form of a superposition of multi-peakons.

If $v=0$, \eqref{101} corresponds to the following classical Camassa-Holm (CH) equation
\begin{equation}
m_{t}+2m u_{x}+m_{x}u=0,
\label{102a}
\end{equation}
which was was derived as a bi-Hamiltonian, shallow water equation in \cite{Camassa1993}. Afterwards, alternative derivations of \eqref{102a} as a model for shallow water waves of moderate amplitude have been obtained by Johnson \cite{Johnson2002} and Constantin
and Lannes \cite{Constantin2009}. A very interesting physical feature of CH is that it models breaking waves, that is, it admits solutions that develop singularities in finite time: they remain bounded, but their slopes become unbounded \cite{Constantin1998} (also see \cite{Beals2000}). The CH equation also admits peaked traveling waves that interact like solitons \cite{Beals2000,Camassa1993,Lenells2005} and, moreover, these peaked waves are orbitally stable: their shapes are stable under small perturbations and
therefore they can be recognized physically \cite{Constantin20003,Lenells2004}.


The multi-component Camassa-Holm equations have been widely studied in recent years. The most popular one among them is the following two-component Camassa-Holm system \cite{Olver1996}
\begin{equation}
\begin{cases}
m_{t}+um_{x}+2mu_{x}+vv_{x}=0,\\
v_{t}+(uv)_{x}=0,
\end{cases}
\label{103}
\end{equation}
where $m=u-u_{xx}$, $u(t,x)$ has the same meaning as in equation \eqref{102a}, $v(t,x)$ is a parameter related to the free surface elevation from equilibrium.

In the past 30 years, the qualitative behaviors of solutions for the CH equation and its various generalizations have been extensively studied, such as local well-posedness \cite{Ji2021,Ji2022,Zhou2022}, global existence of strong solutions
\cite{Constantin1999,Ji2021,Ji2022,Zhou2022}, wave breaking phenomena \cite{Constantin1998,
Constantin1999,Constantin20002,Freire1,Freire2,Freire3,Ji2021,Ji2022,Zhang2023,Zhou2022}, and global existence of weak solutions \cite{Bressan2007,Constantin20004,Wahlen2006,Zhou2024}.
Recently, Brandolese and his co-author \cite{Brandolese2012,Brandolese20141,Brandolese20142,Brandolese20143} obtained a series of interesting results on the blowup problem of CH type equations. Their results show that the local structure of the solution will affect the blowup of the solution. Following the idea of \cite{Brandolese2012,Brandolese20141,Brandolese20142,Brandolese20143}, Chen and his co-author \cite{Chen2016,Chen2017} obtain some local-in-space blowup results for a class of integrable peakon equations and a rotation-two-component Camssa-Holm system;
Novruzov \cite{Novruzov2017,Novruzov2022} establish two local-in-space blowup criteria for a class of nonlinear dispersive wave equations and a two-component nonlinear dispersive wave system.

Inspired by the previous work, the goal of this paper is to establish a new blowup criterion for the Cauchy problem \eqref{101}-\eqref{101a}.

The rest of this paper is organized as follows. In Section 2, we recall some preliminary results. In Section 3, we give a new blowup criterion for the Cauchy problem \eqref{101}-\eqref{101a}.

\textbf{Notation.} Throughout this paper, all spaces of functions are over $\mathbb{S}$ and for simplicity, we drop $\mathbb{S}$ in our notation of function spaces if there is no ambiguity. Additionally, $\|\cdot\|_{s}$ denotes the norm in the Sobolev space $H^{s}(\mathbb{S})$.
\section{Preliminaries}

\setcounter{equation}{0}

\label{sec:2}

In this section, we recall some known results which will be used later. Let $\Lambda=(1-\partial_{x}^{2})^{\frac{1}{2}}$. Then the operator $\Lambda^{-2}$ acting on $L^{2}(\mathbb{S})$ can be expressed by its associated Green's function
$p(x):=\frac{\cosh\left(x-[x]-\frac{1}{2}\right)}{2\sinh \frac{1}{2}},$
where $[x]$ stands for the integer part of $x$ and $\ast$ represents the spacial convolution, as
\begin{equation*}
\Lambda^{-2}f(x)=p\ast f(x)=\frac{1}{2}\int_{0}^{1}\frac{\cosh\left(x-y-[x-y]-\frac{1}{2}\right)}{\sinh \frac{1}{2}}f(y)dy,\ \ f\in L^{2}(\mathbb{S}).
\end{equation*}
Thus, by using the above identities, we can rewrite \eqref{101} as
\begin{eqnarray}
\begin{cases}
u_{t}+(u+v)u_{x}=-P\ast(uv_{x})-\partial_{x}P\ast\left(u^{2}+\frac{1}{2}u_{x}^{2}+u_{x}v_{x}
+\frac{1}{2}v^{2}-\frac{1}{2}v_{x}^{2}\right),\\
v_{t}+(u+v)v_{x}=-P\ast(u_{x}v)-\partial_{x}P\ast\left(v^{2}+\frac{1}{2}v_{x}^{2}+u_{x}v_{x}
+\frac{1}{2}u^{2}-\frac{1}{2}u_{x}^{2}\right).
\end{cases}
\label{201}
\end{eqnarray}

In what follows, we recall the local well-posedness result, conservation law and the sufficient and necessary condition of wave breaking for \eqref{201}.

\begin{lemma}[\cite{Fu2010}]
\label{lemma201}  Given $z_{0}:=\left(
\begin{array}{cc}  u_{0}\\v_{0}\end{array} \right)\in H^{s}\times H^{s} (s>\frac{3}{2})$, then there exists a maximal time $T=T(z_{0})>0$ and a unique solution $z:=\left(
\begin{array}{cc}  u\\v\end{array} \right)$ of the Cauchy problem \eqref{101}-\eqref{101a} such that
$$z=z(\cdot,z_{0})\in C([0,T);H^{s}\times H^{s})\cap C^{1}([0,T);H^{s-1}\times H^{s-1}).$$
Moreover, the solution $z$ depends continuously on the initial data, i.e., the map
$$z_{0}\mapsto z(\cdot,z_{0}):H^{s}\times H^{s}\rightarrow C([0,T);H^{s}\times H^{s})\cap C^{1}([0,T);H^{s-1}\times H^{s-1})$$
is continuous and the maximal time of existence $T>0$ can be chosen to be independent of $s$.
\end{lemma}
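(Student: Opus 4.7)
The plan is to apply Kato's semigroup theory for quasi-linear evolution equations to the nonlocal formulation \eqref{201}. Writing $z=(u,v)^{T}$, I would recast the system in the abstract form $dz/dt + A(z)z = F(z)$, where $A(z)=(u+v)\partial_{x}$ acts diagonally on the two components and $F(z)$ gathers the convolution terms involving $P$ and $\partial_{x}P$. Because the periodic Green's function $p$ belongs to $H^{2}(\mathbb{S})$, both $P\ast$ and $\partial_{x}P\ast$ are bounded on every Sobolev space under consideration, so $F$ will act nicely between Sobolev spaces.

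The first substantive step is to verify Kato's three conditions with base space $X=H^{s-1}\times H^{s-1}$ and ``energy'' space $Y=H^{s}\times H^{s}$: namely, that $A(z)$ is quasi-$m$-accretive on $X$, that the map $z\mapsto A(z)$ is Lipschitz from $Y$ into $\mathcal{L}(Y,X)$, and that the commutator $B(z)=[\Lambda^{s},A(z)]\Lambda^{-s}$ extends to a bounded operator on $X$ that depends continuously on $z\in Y$. These reduce to Kato--Ponce/Moser type commutator and product estimates on the circle; since $s>\tfrac{3}{2}$ and $H^{s}(\mathbb{S})$ is a Banach algebra, the same algebra property shows $F:Y\to Y$ is bounded and locally Lipschitz. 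Kato's theorem then delivers a unique maximal solution in the class $C([0,T);Y)\cap C^{1}([0,T);X)$ together with Lipschitz dependence on initial data in the weaker norm of $X$.

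To upgrade the Lipschitz $X$-dependence to full continuity of the data-to-solution map in $Y$, I would employ a Bona--Smith approximation: mollify $z_{0}$ at scale $\varepsilon$, combine the uniform $Y$-bounds with the quantitative $X$-continuity for the smoothed solutions, and interpolate using the elementary inequality $\|z\|_{H^{s}}^{2}\le\|z\|_{H^{s-1}}\|z\|_{H^{s+1}}$-type arguments to close the loop. For the $s$-independence of $T$, I would appeal to the standard blowup alternative: $T_{s}<\infty$ must force $\int_{0}^{T_{s}}(\|u_{x}\|_{L^{\infty}}+\|v_{x}\|_{L^{\infty}})\,dt=\infty$, a condition controlled by any $H^{3/2+}$ norm via Sobolev embedding and hence unable to distinguish different values of $s$.

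The main technical obstacle will be the commutator estimate for $[\Lambda^{s},(u+v)\partial_{x}]$: one must balance Sobolev indices carefully so that its $\mathcal{L}(X)$-norm is dominated by $\|z\|_{Y}$ and the corresponding Lipschitz difference by $\|z-w\|_{Y}$, which is the crux of Kato's framework for quasi-linear problems. A secondary point of care is that the right-hand side of \eqref{201} contains the quadratic term $u_{x}v_{x}$ inside $\partial_{x}P\ast(\cdot)$; the extra derivative from $\partial_{x}P$ must be absorbed by writing $\partial_{x}P\ast(u_{x}v_{x})$ as a bounded operation on $H^{s-1}$ using integration by parts against $p$, which is where the periodicity of $p$ and its $H^{2}$ regularity play their role.
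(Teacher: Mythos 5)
Your proposal is essentially the standard argument behind this result: the paper itself does not prove Lemma \ref{lemma201} but quotes it from \cite{Fu2010}, where local well-posedness is obtained precisely by applying Kato's quasi-linear semigroup theorem to the nonlocal form \eqref{201} with $A(z)=(u+v)\partial_{x}$ acting diagonally, with continuity of the data-to-solution map and the $s$-independence of $T$ handled as you describe, so your outline matches the cited proof. One small correction: the periodic Green's function $p$ lies in $H^{r}(\mathbb{S})$ only for $r<\tfrac{3}{2}$ (its derivative jumps at integer points), not in $H^{2}$; what you actually need is that $\Lambda^{-2}=(1-\partial_{x}^{2})^{-1}$ gains two derivatives and $\partial_{x}P\ast$ gains one, which follows from the Fourier multiplier $(1+4\pi^{2}k^{2})^{-1}$, and since $u_{x}v_{x}\in H^{s-1}$ by the algebra property (as $s-1>\tfrac{1}{2}$), the term $\partial_{x}P\ast(u_{x}v_{x})$ is already in $H^{s}$ without any integration by parts.
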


\begin{lemma} [\cite{Fu2010}]
\label{lem202}  Let $z_{0}(x)\in H^{s}\times H^{s}\ (s>\frac{3}{2})$ and $T>0$ be the maximal existence time of the corresponding solution $z(t,x)$ of \eqref{201}. Then for any $t\in[0,T)$, we have
$$E(t)=\|u\|^{2}_{1}+\|v\|^{2}_{1}=\|u_{0}\|^{2}_{1}+\|v_{0}\|^{2}_{1}=E(0).$$
\end{lemma}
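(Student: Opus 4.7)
The plan is to compute $\frac{d}{dt}E(t)$ directly and show that it vanishes by combining the two equations of \eqref{101}, tested against $u$ and $v$ respectively, and exploiting the skew-symmetric structure of the cross terms on the circle $\mathbb{S}$. The starting point is the identity
\begin{equation*}
\int_{\mathbb{S}} u\, m_{t}\, dx = \int_{\mathbb{S}} u(u_{t} - u_{xxt})\, dx = \frac{1}{2}\frac{d}{dt}\|u\|_{1}^{2},
\end{equation*}
obtained by integrating the $u_{xxt}$ term by parts, using periodicity to discard boundary contributions; the analogous identity holds for $v\, n_{t}$. Multiplying the first equation of \eqref{101} by $u$, the second by $v$, integrating over one period, and adding, I expect to obtain $\frac{1}{2}\frac{d}{dt}E(t)$ plus diagonal nonlinear terms plus coupling terms equal to zero.

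I would then show that the diagonal contributions $\int u(2m u_{x}+m_{x}u)\,dx$ and $\int v(2n v_{x}+n_{x}v)\,dx$ vanish individually: writing $\int m_{x} u^{2}\,dx = -\int m (u^{2})_{x}\,dx = -2\int m u u_{x}\,dx$ via integration by parts, this cancels against $\int 2m u u_{x}\,dx$. This is exactly the classical Camassa-Holm $H^{1}$-conservation identity, and the circle structure of $\mathbb{S}$ is essential for discarding the boundary terms.

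For the coupling terms I would use periodicity to rewrite
\begin{equation*}
\int u(mv)_{x}\,dx = -\int u_{x} m v\, dx, \qquad \int v(nu)_{x}\,dx = -\int v_{x} n u\, dx,
\end{equation*}
and then collect all four coupling contributions to obtain
\begin{equation*}
\Bigl(\int v m u_{x}\,dx - \int u_{x} m v\,dx\Bigr) + \Bigl(\int u n v_{x}\,dx - \int v_{x} n u\,dx\Bigr) = 0.
\end{equation*}
This gives $\frac{d}{dt}E(t)=0$, and integrating from $0$ to $t$ yields the claimed conservation.

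The main obstacle is justifying these integrations by parts and the exchange of $\frac{d}{dt}$ with the spatial integral at the borderline regularity $s > \frac{3}{2}$, since $m,n \in H^{s-2}$ need not be pointwise defined and the pairings above are only distributional. The standard remedy, which I would invoke here, is to first establish the identity for more regular data (say $s \ge 3$), where all manipulations are classical, and then pass to the limit as the extra regularity is removed by means of the continuous dependence on initial data and the $s$-independence of the maximal existence time $T$, both of which are furnished by Lemma \ref{lemma201}.
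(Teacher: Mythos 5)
Your computation is correct: testing the two equations of \eqref{101} against $u$ and $v$, the diagonal terms cancel by the classical identity $\int m_{x}u^{2}\,dx=-2\int muu_{x}\,dx$, and the four coupling terms cancel in pairs exactly as you display, giving $\frac{d}{dt}E(t)=0$; the density argument via Lemma \ref{lemma201} is the standard way to handle $\frac{3}{2}<s<2$. The paper itself offers no proof of this lemma (it is quoted from the reference [Fu2010]), and your energy argument is essentially the same one used there, so there is nothing further to reconcile.
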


\begin{lemma} [\cite{Fu2010}]
\label{lem203} Let $z_{0}(x)\in H^{s}\times H^{s}\ (s>\frac{3}{2})$ and $T>0$ be the maximal existence time of the corresponding solution $z(t,x)$ of \eqref{201}. Then $T$ is finite if and only if
\begin{equation*}
\liminf_{t\rightarrow T}\inf_{x\in\mathbb{\mathbb{S}}}{\left(u_{x}(t,x)+v_{x}(t,x)\right)}=-\infty.
\end{equation*}
\end{lemma}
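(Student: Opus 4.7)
This is a standard wave-breaking dichotomy. For the easy direction, suppose $\liminf_{t\to T}\inf_x(u_x+v_x) = -\infty$. Since $s > \tfrac{3}{2}$, the Sobolev embedding $H^{s-1}\hookrightarrow L^\infty$ gives $\|u_x\|_\infty + \|v_x\|_\infty \leq C(\|u\|_s + \|v\|_s)$, so the $H^s\times H^s$-norm must be unbounded along a sequence $t_n \to T$. By the blow-up alternative in Lemma \ref{lemma201}, this forces $T<\infty$.

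For the converse (and main) direction, I argue by contrapositive: assume there exists $K > 0$ with $u_x + v_x \geq -K$ on $[0,T)\times\mathbb{S}$, and aim to derive a uniform bound on $\|u\|_s + \|v\|_s$, contradicting $T<\infty$. Rewriting \eqref{101} in transport form,
\begin{align*}
m_t + (u+v)m_x &= -(2u_x+v_x)m - v_x n,\\
n_t + (u+v)n_x &= -(2v_x+u_x)n - u_x m,
\end{align*}
and introducing the characteristic flow $\dot{q} = (u+v)(t,q)$, $q(0,x)=x$, a direct computation (using that $(1,-1)$ is a left eigenvector of the coefficient matrix with eigenvalue $u_x+v_x$) gives
\begin{equation*}
\frac{d}{dt}(m-n)(t,q(t,x)) = -(u_x+v_x)(m-n)(t,q(t,x)).
\end{equation*}
The hypothesis $u_x+v_x\geq -K$ then yields $\|m(t)-n(t)\|_\infty \leq \|m_0-n_0\|_\infty e^{KT}$ on $[0,T)$.

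Transferring this through the Green's function, I use $u_x-v_x = p_x\ast(m-n)$ with $p_x\in L^1(\mathbb{S})$ to obtain $\|u_x-v_x\|_\infty \leq C$. The remaining combination satisfies
\begin{equation*}
\frac{d}{dt}(m+n)(t,q(t,x)) = -2(u_x+v_x)(m+n) - (u_x-v_x)(m-n),
\end{equation*}
and Duhamel's formula combined with the two bounds above yields a uniform $L^\infty$ control of $m+n$, hence of $m$ and $n$ separately. Via the identities $u=p\ast m$, $u_{xx}=u-m$ (and analogously for $v$), this promotes to a uniform $W^{2,\infty}\times W^{2,\infty}$ bound on $(u,v)$ on $[0,T)$.

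Finally, I close the $H^s$ estimate. Applying $\Lambda^s$ to \eqref{201}, pairing with $(\Lambda^s u,\Lambda^s v)$ in $L^2$, and using the Kato--Ponce commutator bound together with the fact that $P\ast$ and $\partial_x P\ast$ gain one derivative on the Sobolev scale, I expect
\begin{equation*}
\frac{d}{dt}(\|u\|_s^2+\|v\|_s^2)\leq C(1+\|u_x\|_\infty+\|v_x\|_\infty)(\|u\|_s^2+\|v\|_s^2),
\end{equation*}
where the $\|u\|_\infty,\|v\|_\infty$ contributions are absorbed using the energy conservation of Lemma \ref{lem202}. Gronwall's inequality then gives a uniform $H^s\times H^s$ bound on $[0,T)$, contradicting finite $T$. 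The main obstacle is the coupling of the $(m,n)$-system: neither scalar equation alone admits a clean characteristic estimate under the hypothesis, and the identification of $m-n$ as the combination whose evolution is driven \emph{precisely} by $u_x+v_x$ is what matches the hypothesis to the dynamics and unlocks the argument.
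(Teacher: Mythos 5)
Note first that the paper does not actually prove Lemma \ref{lem203}: it is quoted from \cite{Fu2010}, so there is no in-paper argument to compare against and your reconstruction has to stand on its own. Its structural core is correct and is a genuinely nice observation about this system: rewriting \eqref{101} in transport form and checking that $(m-n)_t+(u+v)(m-n)_x+(u_x+v_x)(m-n)=0$ is exactly the diagonalization that matches the hypothesis, and your companion identity $\frac{d}{dt}(m+n)=-2(u_x+v_x)(m+n)-(u_x-v_x)(m-n)$ along the flow also checks out; together with $u-v=p\ast(m-n)$ and a Kato--Ponce/Gronwall closure this is a viable route to the main implication (if $u_x+v_x$ is bounded below on $[0,T)$ with $T<\infty$, the $H^s\times H^s$ norm stays bounded, contradicting maximality).

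There are, however, two genuine gaps. First, regularity: the lemma is claimed for all $s>\frac{3}{2}$, but your characteristic estimates require $m_0-n_0\in L^\infty$ and $m,n$ continuous in $x$, whereas $m,n$ only lie in $H^{s-2}$; pointwise control needs $s>\frac{5}{2}$, and even an $L^2$ version of the same computation (which works, since $\frac{d}{dt}\int(m-n)^2\,dx=-\int(u_x+v_x)(m-n)^2\,dx$) needs $s\ge 2$. For $\frac{3}{2}<s\le\frac{5}{2}$ the quantity $\|m_0-n_0\|_{L^\infty}$ may be infinite and your key estimate is vacuous, so you must add the standard reduction to more regular data (density plus the statement in Lemma \ref{lemma201} that the maximal time can be chosen independent of $s$), which the proposal does not address. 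Second, the ``easy direction'' is not valid as written: from $\liminf_{t\to T}\inf_x(u_x+v_x)=-\infty$ and Sobolev embedding you deduce that the $H^s\times H^s$ norm is unbounded along a sequence $t_n\to T$ and then invoke the blow-up alternative to conclude $T<\infty$. But the blow-up alternative is the converse implication ($T<\infty$ forces the norm to blow up); an unbounded norm as $t\to T$ is perfectly compatible with $T=\infty$ (growth in infinite time), so nothing in your argument forces $T$ to be finite. To rule out a global solution whose slope sum drifts to $-\infty$ one needs a quantitative mechanism, for instance the Riccati-type inequality $w_x'\le-\frac{1}{2}w_x^2+K^2$ along characteristics (precisely the inequality driving the proof of Theorem \ref{the301}), combined with control of the evolution of the spatial infimum, showing that once $\inf_x(u_x+v_x)$ drops below $-\sqrt{2}K$ the solution blows up in finite time; only with such an argument does $\liminf=-\infty$ become incompatible with $T=\infty$.
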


\section{Blowup result}

\setcounter{equation}{0}

\label{sec:3}
In this section, we will give a new blowup criterion for problem \eqref{201}. To this end, we introduce the associated Lagrangian scale of problem \eqref{201}
\begin{equation}
\begin{cases}
\frac{dq(t,x)}{dt}=(u+v)(t,q(t,x)),\ (t,x)\in[0,T)\times\mathbb{R},\\
q(0,x)=x,\ x\in \mathbb{R}.
\end{cases}
\label{301}
\end{equation}
Applying the classical results in the theory of ODEs, we can show that the map $q(t,\cdot)$ is an increasing diffeomorphism of $\mathbb{R}$ with
$$q_{x}(t,x)=e^{\int_{0}^{t}(u_{x}+v_{x})(\tau,q(\tau,x))d\tau}>0,\ (t,x)\in [0,T)\times \mathbb{R}.$$

Differentiating the first and the second equation in \eqref{201} with respect to $x$ and applying the relation $\partial_{x}^{2}p\ast f(x)=p\ast f(x)-f(x)$, we get
\begin{align*}
&u_{tx}+(u+v)u_{xx}+\frac{1}{2}u_{x}^{2}+\frac{1}{2}v_{x}^{2}\nonumber\\
=&u^{2}+\frac{1}{2}v^{2}
-P\ast\left(u^{2}+\frac{1}{2}u_{x}^{2}+u_{x}v_{x}+\frac{1}{2}v^{2}
-\frac{1}{2}v_{x}^{2}\right)-\partial_{x}P\ast(uv_{x})
\end{align*}
and
\begin{align*}
&v_{tx}+(u+v)v_{xx}+\frac{1}{2}u_{x}^{2}+\frac{1}{2}v_{x}^{2}\nonumber\\
=&v^{2}+\frac{1}{2}u^{2}
-P\ast\left(v^{2}+\frac{1}{2}v_{x}^{2}+u_{x}v_{x}+\frac{1}{2}u^{2}
-\frac{1}{2}u_{x}^{2}\right)-\partial_{x}P\ast(u_{x}v).
\end{align*}
Denote $w=u+v$. Then we have
\begin{align}
w_{t}+ww_{x}&=-\partial_{x}P\ast\left(\frac{3}{2}v^{2}+uv+2u_{x}v_{x}+\frac{3}{2}u^{2}\right),
\label{302a}\\
w_{tx}+ww_{xx}+u_{x}^{2}+v_{x}^{2}
&=\frac{3}{2}u^{2}+\frac{3}{2}v^{2}
-P\ast\left(\frac{3}{2}v^{2}+uv+2u_{x}v_{x}+\frac{3}{2}u^{2}\right)+uv.
\label{302}
\end{align}

Let us denote $'$ to be the derivative $\partial_{t}+w\partial_{x}$. Then the dynamics of $w$ and $w_{x}$ along the characteristics $q(t,x)$ are formulated in the following lemma.

\begin{lemma}
\label{lem204}  Let $w_{0}(x)\in H^{s}\ (s>\frac{3}{2})$. Then $w(t,q(t,x))$ and $w_{x}(t,q(t,x))$ satisfying the following integro-differential equations
\begin{align*}
w'(t)=&P_{+}\ast\left(\frac{3}{2}v^{2}+uv+2u_{x}v_{x}+\frac{3}{2}u^{2}\right)
-P_{-}\ast\left(\frac{3}{2}v^{2}+uv+2u_{x}v_{x}+\frac{3}{2}u^{2}\right),\\
w_{x}'(t)=&-u_{x}^{2}-v_{x}^{2}+\frac{3}{2}u^{2}+\frac{3}{2}v^{2}+uv
-P_{+}\ast\left(\frac{3}{2}v^{2}+uv+2u_{x}v_{x}+\frac{3}{2}u^{2}\right)
\nonumber\\
&-P_{-}\ast\left(\frac{3}{2}v^{2}+uv+2u_{x}v_{x}+\frac{3}{2}u^{2}\right).
\end{align*}
\end{lemma}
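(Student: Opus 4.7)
The plan is to combine two already-established ingredients: (a) the equations \eqref{302a} and \eqref{302} for $w=u+v$, derived just above the lemma, and (b) a standard exponential splitting of the periodic Green's kernel into $P=P_++P_-$ with $\partial_xP=P_--P_+$. Once (b) is in place, the lemma is essentially a one-line rewriting.

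First I would observe that the convective derivative $'=\partial_t+w\partial_x$ coincides with $\frac{d}{dt}$ along the characteristic $q(t,x)$ defined in \eqref{301}. Hence the left-hand side $w_t+ww_x$ of \eqref{302a} equals $w'(t)$ along $q(t,x)$, and similarly the left-hand side of \eqref{302} equals $w_x'(t)+u_x^2+v_x^2$. This turns \eqref{302a}--\eqref{302} into explicit expressions for $w'(t)$ and $w_x'(t)$ involving $P\ast\Phi$ and $\partial_xP\ast\Phi$, where $\Phi:=\tfrac{3}{2}u^2+\tfrac{3}{2}v^2+uv+2u_xv_x$ is the common nonlocal source.

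Next I would introduce the splitting by writing $\cosh y=\tfrac{1}{2}(e^y+e^{-y})$ in the definition of $P$ and setting
\[
P_-(x):=\frac{e^{x-[x]-1/2}}{4\sinh(1/2)},\qquad P_+(x):=\frac{e^{-(x-[x]-1/2)}}{4\sinh(1/2)}.
\]
These are positive, periodic functions satisfying $P=P_++P_-$ pointwise on $\mathbb{S}$. A direct differentiation away from the integer jump points gives $P_-'=P_-$ and $P_+'=-P_+$, so that $\partial_xP=P_--P_+$ as an $L^1(\mathbb{S})$ identity (the jump contributions in the distributional sense cancel because $P\in C(\mathbb{S})$). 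Substituting $-\partial_xP\ast\Phi=P_+\ast\Phi-P_-\ast\Phi$ and $P\ast\Phi=P_+\ast\Phi+P_-\ast\Phi$ into the formulas from the previous paragraph yields the two integro-differential equations stated in the lemma.

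No step is a serious obstacle; the proof is a short substitution. The real work is done \emph{before} the lemma, in obtaining \eqref{302a}--\eqref{302} via summation and $x$-differentiation of \eqref{201} together with the identity $\partial_x^2p\ast f=p\ast f-f$. The $P_\pm$ splitting introduced here is a preparatory device of Brandolese--Chen type: positivity of $P_\pm$ and the resulting signed structure of $P_\pm\ast\Phi$ at a carefully chosen point along $q(t,x)$ are exactly what the subsequent local-in-space blowup argument will exploit to close a Riccati-type inequality for $w_x$.
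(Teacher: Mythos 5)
Your proof is correct and follows essentially the same route as the paper: both take \eqref{302a}--\eqref{302} as given, split the periodic Green's function as $P=P_{+}+P_{-}$ with $\partial_{x}P=P_{-}-P_{+}$, and substitute along the characteristics $q(t,x)$, where $'=\partial_t+w\partial_x$ reduces the left-hand sides to $w'$ and $w_x'+u_x^2+v_x^2$. If anything, your explicit periodic kernels $P_{\pm}(x)=e^{\mp\left(x-[x]-\frac{1}{2}\right)}/\left(4\sinh\frac{1}{2}\right)$ are a cleaner and correctly normalized realization of this splitting than the operator formulas displayed in the paper's proof, which resemble the whole-line versions; the identities $P=P_{+}+P_{-}$ and $P_{x}=P_{-}-P_{+}$ that the argument actually uses are exactly what you verify.
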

\begin{proof}
Denote $p(x):=\frac{\cosh\left(x-[x]-\frac{1}{2}\right)}{2\sinh \frac{1}{2}}$ the fundamental solution of operator $(1-\partial_{x}^{2})^{-1}$ on $\mathbb{S}$ and define the two convolution operators $p_{+}$ and $p_{-}$ as
$$P_{+}\ast f(x)=\frac{1}{2}e^{-x}\int_{0}^{x}e^{y}f(y)dy,\ \
P_{-}\ast f(x)=\frac{1}{2}e^{x}\int^{1}_{x}e^{-y}f(y)dy,$$
then $P=P_{+}+P_{-}$ and $P_{x}=P_{-}-P_{+}$.
From \eqref{302}, we get
\begin{align*}
w'(t)=&w_{t}+ww_{x}\nonumber\\
=&-\partial_{x}P\ast\left(\frac{3}{2}v^{2}+uv+2u_{x}v_{x}+\frac{3}{2}u^{2}\right)\nonumber\\
=&P_{+}\ast\left(\frac{3}{2}v^{2}+uv+2u_{x}v_{x}+\frac{3}{2}u^{2}\right)
-P_{-}\ast\left(\frac{3}{2}v^{2}+uv+2u_{x}v_{x}+\frac{3}{2}u^{2}\right),\nonumber\\
w_{x}'(t)=&w_{tx}+ww_{xx}\nonumber\\
=&-u_{x}^{2}-v_{x}^{2}
+\frac{3}{2}u^{2}+\frac{3}{2}v^{2}+uv
-P\ast\left(\frac{3}{2}v^{2}+uv+2u_{x}v_{x}+\frac{3}{2}u^{2}\right)\nonumber\\
=&-u_{x}^{2}-v_{x}^{2}
+\frac{3}{2}u^{2}+\frac{3}{2}v^{2}+uv
-P_{+}\ast\left(\frac{3}{2}v^{2}+uv+2u_{x}v_{x}+\frac{3}{2}u^{2}\right)\nonumber\\
&-P_{-}\ast\left(\frac{3}{2}v^{2}+uv+2u_{x}v_{x}+\frac{3}{2}u^{2}\right).
\end{align*}
This completes the proof of lemma \ref{lem204}.
\end{proof}

In what follows, we give a new blowup criterion for problem \eqref{201}.
\begin{theorem}
\label{the301} Given $z_{0}(x)\in H^{s}\times H^{s}, s>\frac{3}{2}$. Assume that there exists $x_{0}\in \mathbb{S}$ such that
\begin{equation}
u_{0x}(x_{0})+v_{0x}(x_{0})<-|u_{0}(x_{0})+v_{0}(x_{0})|-\sqrt{2}K,
\label{303}
\end{equation}
where $K=\sqrt{\left(\frac{1}{2}+2\coth \frac{1}{2}\right)E(0)}$.
Then the corresponding solution $z$ of \eqref{201} with the initial data $z_{0}(x)$ blows up in finite time with an estimate of the blowup
time $T^{\ast}$ as
$$0<T^{\ast}\leq\frac{1}{\sqrt{2}K}
\ln\frac{\sqrt{(u_{0x}(x_{0})+v_{0x}(x_{0}))^{2}-(u_{0}(x_{0})+v_{0}(x_{0}))^{2}}
+\sqrt{2}K}{\sqrt{(u_{0x}(x_{0})+v_{0x}(x_{0}))^{2}-(u_{0}(x_{0})+v_{0}(x_{0}))^{2}}
-\sqrt{2}K}.$$
Moreover, the blowup point must be inside the interval
$$\left[x_{0}-\frac{1}{\sqrt 2}\sqrt{E(0)}T^{*},
x_{0}+\frac{1}{\sqrt 2}\sqrt{E(0)}T^{*}\right].$$
\end{theorem}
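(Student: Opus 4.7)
The plan is to follow the solution along the characteristic $q(t,x_{0})$ issuing from the point $x_{0}$ in the hypothesis \eqref{303}. Setting $w=u+v$, I would introduce $\alpha(t):=w(t,q(t,x_{0}))$ and $\beta(t):=w_{x}(t,q(t,x_{0}))$; Lemma~\ref{lem204} then supplies explicit evolution equations for $\alpha$ and $\beta$ along the flow. The hypothesis reads $\beta(0)<-|\alpha(0)|-\sqrt{2}K$, which forces $\beta(0)<0$ and $\beta(0)^{2}-\alpha(0)^{2}>2K^{2}$. The key auxiliary quantity is $\phi(t):=\sqrt{\beta(t)^{2}-\alpha(t)^{2}}$, which is positive on a maximal interval $[0,T^{*})$ and satisfies $\phi(0)>\sqrt{2}K$; a short continuity/bootstrap argument, running in tandem with the Riccati inequality derived below, will ensure that $\beta(t)$ stays strictly negative there, so $|\beta|\geq\phi$ throughout.

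The heart of the proof is to establish the Riccati inequality $\phi'(t)\geq\frac{1}{2}(\phi(t)^{2}-2K^{2})$. Starting from $\phi\phi'=\beta\beta'-\alpha\alpha'$, substituting the ODEs from Lemma~\ref{lem204}, and using the elementary bound $u_{x}^{2}+v_{x}^{2}\geq\frac{1}{2}(u_{x}+v_{x})^{2}=\frac{1}{2}\beta^{2}$ together with $\beta<0$, I expect to obtain $\phi\phi'\geq \frac{|\beta|}{2}\phi^{2}+\beta(u^{2}+v^{2})-\beta\,P\ast g+\alpha\,P_{x}\ast g$, where $g=\frac{3}{2}u^{2}+uv+2u_{x}v_{x}+\frac{3}{2}v^{2}$. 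Since $|\alpha|\leq|\beta|$, the remainder is bounded in absolute value by $|\beta|[(u^{2}+v^{2})+|P\ast g|+|P_{x}\ast g|]$. Each of these three terms can be estimated via the conservation law (Lemma~\ref{lem202}) and sharp pointwise bounds on $\mathbb{S}$: the reproducing-kernel inequality $\|f\|_{\infty}^{2}\leq p(0)\|f\|_{1}^{2}=\frac{1}{2}\coth\frac{1}{2}\|f\|_{1}^{2}$, the kernel bounds $\|p\|_{\infty}=\frac{1}{2}\coth\frac{1}{2}$ and $\|p_{x}\|_{\infty}=\frac{1}{2}$, and the bound $\|g\|_{1}\leq 2E(0)$ obtained by Cauchy--Schwarz on the quadratic nonlinearity. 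Summing these contributions yields $(u^{2}+v^{2})+|P\ast g|+|P_{x}\ast g|\leq K^{2}$ with exactly $K^{2}=(\frac{1}{2}+2\coth\frac{1}{2})E(0)$. Dividing $\phi\phi'\geq \frac{|\beta|}{2}(\phi^{2}-2K^{2})$ by $\phi>0$ and using $|\beta|\geq\phi$ then delivers the desired Riccati inequality.

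Integrating the Riccati inequality in the form $d\phi/(\phi^{2}-2K^{2})\geq \frac{1}{2}\,dt$ from $0$ to the blowup time of $\phi$ and letting $\phi\to+\infty$ produces precisely the explicit bound on $T^{*}$ stated in the theorem. As $|\beta(t)|\geq\phi(t)\to+\infty$ with $\beta(t)<0$, we have $w_{x}(t,q(t,x_{0}))\to-\infty$, so Lemma~\ref{lem203} forces the solution of \eqref{201} to break down no later than $T^{*}$. For the blowup location, the limiting position $x^{*}=\lim_{t\to T^{*}}q(t,x_{0})$ satisfies $|x^{*}-x_{0}|\leq\int_{0}^{T^{*}}|\alpha(s)|\,ds\leq \|w\|_{L^{\infty}}T^{*}$, and a Sobolev-type estimate on $\mathbb{S}$ combined with energy conservation yields the quantitative $L^{\infty}$ bound on $w$ claimed in the theorem.

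The main obstacle is the careful bookkeeping that packages all the convolution and Sobolev estimates into the precise constant $K^{2}=(\frac{1}{2}+2\coth\frac{1}{2})E(0)$ appearing in the theorem, together with the simultaneous bootstrap argument ensuring that $\beta(t)$ cannot vanish on $[0,T^{*})$, so that both the auxiliary quantity $\phi=\sqrt{\beta^{2}-\alpha^{2}}$ and the inequality $|\beta|\geq\phi$ remain legitimate throughout the analysis.
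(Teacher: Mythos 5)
Your proposal is correct and follows essentially the same route as the paper: your $\phi=\sqrt{w_x^2-w^2}$ along the characteristic is exactly the paper's $g=\sqrt{-MN}$ with $M=w-w_x$, $N=w+w_x$, and you arrive at the same Riccati inequality $\phi'\geq\frac{1}{2}\phi^2-K^2$, the same integration for $T^{*}$, and the same characteristic-speed argument for the blowup location. The only difference is cosmetic bookkeeping — you estimate the nonlocal terms via $\|p\|_{L^\infty}$, $\|p_x\|_{L^\infty}$ and $\|G\|_{L^1}\leq 2E(0)$ rather than the paper's $P_{\pm}$ splitting — and your bounds indeed fit within the stated constant $K^2=\left(\frac{1}{2}+2\coth\frac{1}{2}\right)E(0)$.
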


\begin{proof}
Denote $'$ to be the derivative $\partial_{t}+w\partial_{x}$ along the characteristics $q(t,x_{0})$ emanating from $x_{0}$. Below, we track the dynamics of
\begin{align*}
M(t)=&\left(w-w_{x}\right)(t,q(t,x_{0})),\
N(t)=\left(w+w_{x}\right)(t,q(t,x_{0}))
\end{align*}
along the characteristic. Then we get
\begin{align}
M'(t)=&w'-w_{x}'\nonumber\\
=&u_{x}^{2}+v_{x}^{2}
-\frac{3}{2}u^{2}-\frac{3}{2}v^{2}-uv
+P_{+}\ast\left(3u^{2}+2uv+4u_{x}v_{x}+3v^{2}\right)\nonumber\\
\geq&\frac{1}{2}w_{x}^{2}-\frac{1}{2}w^{2}-u^{2}-v^{2}
+P_{+}\ast\left(3u^{2}+2uv+4u_{x}v_{x}+3v^{2}\right)\nonumber\\
=&-\frac{1}{2}MN(t)-u^{2}-v^{2}
+P_{+}\ast\left(3u^{2}+2uv+4u_{x}v_{x}+3v^{2}\right),
\label{304}\\
N'(t)=&w'+w_{x}'\nonumber\\
=&-u_{x}^{2}-v_{x}^{2}
+\frac{3}{2}u^{2}+\frac{3}{2}v^{2}+uv
-P_{-}\ast\left(3u^{2}+2uv+4u_{x}v_{x}+3v^{2}\right)\nonumber\\
\leq&-\frac{1}{2}w_{x}^{2}+\frac{1}{2}w^{2}+u^{2}+v^{2}
-P_{-}\ast\left(3u^{2}+2uv+4u_{x}v_{x}+3v^{2}\right)\nonumber\\
=&\frac{1}{2}MN(t)+u^{2}+v^{2}
-P_{-}\ast\left(3u^{2}+2uv+4u_{x}v_{x}+3v^{2}\right),
\label{305}
\end{align}
where we used $u^{2}+v^{2}\geq \frac{(u+v)^{2}}{2}$ and $u_{x}^{2}+v_{x}^{2}\geq \frac{(u_{x}+v_{x})^{2}}{2}$.

According Sobolev's embedding theorem, Young's inequality and lemma \ref{lem202}, we get
\begin{equation}
\|u^{2}+v^{2}\|_{L^{\infty}}\leq \frac{1}{2}(\|u\|^{2}_{1}+\|v\|^{2}_{1})
=\frac{1}{2}E(0),
\label{306}
\end{equation}
and
\begin{align}
\|P_{+}\ast\left(3u^{2}+2uv+4u_{x}v_{x}+3v^{2}\right)\|_{L^{\infty}}
\leq&4\|P_{+}\|_{L^{\infty}}(\|u^{2}\|_{L^{1}}+\|v^{2}\|_{L^{1}}
+\|u_{x}^{2}\|_{L^{1}}+\|v_{x}^{2}\|_{L^{1}})\nonumber\\
=&4\|P_{+}\|_{L^{\infty}}(\|u\|^{2}_{L^{2}}+\|v\|^{2}_{L^{2}}
+\|u_{x}\|^{2}_{L^{2}}+\|v_{x}\|^{2}_{L^{2}})\nonumber\\
=&2\coth \frac{1}{2}E(0).
\label{307}
\end{align}
Similarly, we get
\begin{equation}
\|P_{-}\ast\left(3u^{2}+2uv+4u_{x}v_{x}+3v^{2}\right)\|_{L^{\infty}}
\leq2\coth \frac{1}{2}E(0).
\label{308}
\end{equation}
Substituting \eqref{306}--\eqref{308} into \eqref{304}--\eqref{305}, we get
\begin{equation}
M'(t)\geq -\frac{1}{2}MN(t)-K^{2}\ \text{and}\
N'(t)\leq \frac{1}{2}MN(t)+K^{2},
\label{3010}
\end{equation}
where $K=\sqrt{\left(\frac{1}{2}+2\coth \frac{1}{2}\right)E(0)}$.
The expected monotonicity of $M(t)$ and $N(t)$ indicate that we would like to have
\begin{equation}
\frac{1}{2}MN(t)+K^{2}<0,\ t\in[0,T).
\label{3011}
\end{equation}
From \eqref{303}, we get
\begin{equation*}
w_{0x}+w_{0}<-\sqrt{2}K\ \text{and}\
w_{0x}-w_{0}<-\sqrt{2}K.
\end{equation*}
Thus, we have
\begin{equation}
M(0)>\sqrt{2}K,\ N(0)<-\sqrt{2}K,\ M'(0)>0,\ N'(0)<0.
\label{3012}
\end{equation}
Therefore, over the time of existence along the characteristics emanating from $x_{0}$ it always holds that
\begin{equation}
M'(t)>0,\ N'(t)<0,\ MN(t)<-2K^{2}.
\label{3013}
\end{equation}

In what follows, we consider the evolution of the quantity $g(t)=\sqrt{-MN(t)}$. Since $M-N\geq 2g$, then we get
\begin{align}
g'(t)=-\frac{M'N+MN'}{2\sqrt{-MN(t)}}
\geq&-\frac{\left(-\frac{1}{2}MN-K^{2}\right)N
+M\left(\frac{1}{2}MN+K^{2}\right)}{2\sqrt{-MN}}\nonumber\\
\geq& -\frac{(M-N)K^{2}+\frac{1}{2}MN(M-N)}{2\sqrt{-MN}}\nonumber\\
=&-\frac{1}{2\sqrt{-MN}}\left(K^{2}+\frac{1}{2}MN\right)(M-N)
\nonumber\\
\geq&\frac{1}{2}g^{2}-K^{2}.
\label{3014}
\end{align}
From \eqref{3013}, we obtain that $g(t)=\sqrt{-MN(t)}$ is increasing when $t\in[0,T)$, thus
\begin{equation*}
g(t)=\sqrt{-MN(t)}\geq \sqrt{-MN(0)}=\sqrt{(w^{2}_{0x}-w^{2}_{0})(x_{0})}>\sqrt{2}K.
\end{equation*}
Therefore, we get
\begin{equation*}
\frac{1}{2}g^{2}-K^{2}>0.
\end{equation*}
From \eqref{3014}, we can obtain that $\lim_{t\rightarrow T^{\ast}}g(t)=+\infty$. Moreover, the maximal existence time $T^{\ast}$ can be estimated from above as follows
\begin{align*}
T^{\ast}&\leq\frac{1}{\sqrt{2}K}
\ln\frac{g(0)+\sqrt{2}K}{g(0)-\sqrt{2}K}\nonumber\\
&=\frac{1}{\sqrt{2}K}
\ln\frac{\sqrt{(u_{0x}(x_{0})+v_{0x}(x_{0}))^{2}-(u_{0}(x_{0})+v_{0}(x_{0}))^{2}}
+\sqrt{2}K}{\sqrt{(u_{0x}(x_{0})+v_{0x}(x_{0}))^{2}-(u_{0}(x_{0})+v_{0}(x_{0}))^{2}}
-\sqrt{2}K}.
\end{align*}
Since $g(t)\leq\frac{M-N}{2}=-w_{x}(t,q(t,x_{0}))$, we get $\lim_{t\rightarrow T^{\ast}}w_{x}(t,q(t,x_{0}))=-\infty$.

From the above argument, we can obtain the information for the location point of blowup. In fact, this information is a consequence of the elementary calculus inequality for continuously derivable functions
$$f: |f(t)- f(0)| \leq \kappa |t|,\ \kappa=\sup |f'|.$$
We can to apply this relation to the function $f(t)=q(t,x_0)$.
From \eqref{301}, we get
$$\kappa\leq \|u(t)\|_{L^{\infty}}+\|v(t)\|_{L^{\infty}}\leq \frac{1}{\sqrt 2}(\|u_0\|_{1}+\|v_0\|_{1})\leq \frac{1}{\sqrt 2}\sqrt{E(0)}.$$
Moreover, $f(0)=x_0$.
So, we deduce, for $0<t<T^{*}$,
$$|q(t,x_0)-x_0|\leq\frac{1}{\sqrt 2}\sqrt{E(0)}t.$$
Hence, the blowup point $q(T^{*},x_0)$ must be inside the interval
$$\left[x_{0}-\frac{1}{\sqrt 2}\sqrt{E(0)}T^{*},
x_{0}+\frac{1}{\sqrt 2}\sqrt{E(0)}T^{*}\right].$$
This completes the proof of theorem \ref{the301}.
\end{proof}



{\bf Acknowledgement.}
This work is partially supported by NSFC Grant (no. 12201539) and Natural Science Foundation of Xinjiang Uygur Autonomous Region (no. 2022D01C65) and Natural Science Foundation of Gansu Province (no. 23JRRG0006) and The Youth Doctoral Support Project for Universities in Gansu Province (no. 2024QB-106).
\bibliographystyle{elsarticle-num}
\bibliography{<your-bib-database>}

\begin{thebibliography}{00}
\bibitem{Beals2000} R. Beals, D.H. Sattinger and J. Szmigielski, Multipeakons and the classical moment problem, Adv. Math., 154 (2000) 229--57.

\bibitem{Brandolese2012} L. Brandolese, Breakdown for the Camassa-Holm equation using decay criteria and persistence in weighted spaces, Int. Math. Res. Not. IMRN, 22 (2012) 5161--5181.

\bibitem{Brandolese20141} L. Brandolese, Local-in-space criteria for blowup in shallow water and dispersive rod equations, Commun. Math. Phys., 330 (2014) 401--414.

\bibitem{Brandolese20142} L. Brandolese and M.F. Cortez, On permanent and breaking waves in hyperelastic rods and rings, J. Funct. Anal., 266 (2014) 6954--6987.

\bibitem{Brandolese20143} L. Brandolese and M.F. Cortez, Blowup issues for a class of nonlinear dispersive wave equations, J. Differential Equations, 256 (2014) 3981--3998.

\bibitem{Bressan2007} A. Bressan and A. Constantin, Global conservative solutions of the Camassa-Holm equation, Arch. Ration. Mech. Anal., 183 (2007) 215--239.

\bibitem{Camassa1993} R. Camassa and D. Holm, An integrable shallow water equation with peaked soliton, Phys. Rev. Lett., 71 (1993) 1661--1664.

\bibitem{Chen2016} R. Chen, F. Guo, Y. Liu and C. Qu, Analysis on the blow-up of solutions to a class of integrable peakon equations, J. Funct. Anal., 270 (2016) 2343--2374.

\bibitem{Chen2017} R. Chen, L. Fan, H. Gao and Y. Liu, Breaking waves and solitary waves to the rotation-two-component Camassa-Holm system, SIAM J. Math. Anal., 49 (2017) 3573--3602


\bibitem{Constantin1998} A. Constantin and J. Escher, Wave breaking for nonlinear nonlocal shallow water equations, Acta Math., 181 (1998) 229--243.

\bibitem{Constantin1999} A. Constantin and H.P. McKean, A shallow water equation on the circle, Comm. Pure Appl. Math., 52 (1999) 949--982.

\bibitem{Constantin20003} A. Constantin and W.A. Strauss, Stability of peakons, Comm. Pure Appl. Math., 53 (2000) 603--610.

\bibitem{Constantin20004} A. Constantin and L. Molinet, Global weak solutions for a shallow water equation, Comm. Math. Phys., 211 (2000) 45--61.

\bibitem{Constantin20002} A. Constantin and J. Escher, On the blow-up rate and the blow-up set of breaking waves for a shallow water equation, Math. Z., 233 (2000) 75--91.


\bibitem{Constantin2009} A. Constantin and D. Lannes, The hydrodynamical relevance of the Camassa-Holm and Degasperis-Procesi equations, Arch. Ration. Mech. Anal., 192 (2009) 165--186.

\bibitem{Fu2009} Y. Fu and C. Qu, Well posedness and blow-up solution for a new coupled Camassa-Holm equations with peakons, J. Math. Phys., 50 (2009) 012906.

\bibitem{Fu2010} Y. Fu, Y. Liu and C. Qu, Well-posedness and blow up solution for a modified two-component periodic Camassa-Holm system with peakons, Math. Ann., 348 (2010) 415--448.


\bibitem{Freire1} I.L. Freire, N.S. Filho, L.C. Souza and C.E. Toffoli, Invariants and wave breaking analysis of a Camassa-Holm type equation with quadratic and cubic non-linearities, J. Differential Equations, 269 (2020) 56--77.

\bibitem{Freire2} I.L. Freire, Wave breaking for shallow water models with time decaying solutions, J. Differential Equations, 269 (2020) 3769--3793.

\bibitem{Freire3} I.L. Freire and C.E. Toffoli, Wave breaking and asymptotic analysis of solutions for a class of weakly dissipative nonlinear wave equations, J. Differential Equations, 358 (2023) 457--483.


\bibitem{Ji2021} S. Ji and Y. Zhou, Wave breaking phenomena and global existence for the weakly dissipative generalized Novikov equation, Proc. A., 477 (2021) 20210532.

\bibitem{Ji2022} S. Ji and Y. Zhou, Wave breaking and global solutions of the weakly dissipative periodic Camassa-Holm type equation, J. Differential Equations, 306 (2022) 439--455.

\bibitem{Johnson2002} R.S. Johnson, Camassa-Holm, Korteweg-de Vries and related models for water waves, J. Fluid Mech., 455 (2002) 63--82.

\bibitem{Lenells2004} J. Lenells, Stability of periodic peakons, Int. Math. Res. Not. IMRN, 10 (2004) 485--499.

\bibitem{Lenells2005} J. Lenells, Traveling wave solutions of the Camassa-Holm equation, J. Differential Equations, 217 (2005) 393--430.

\bibitem{Novruzov2017} E. Novruzov, Local-in-space blow-up criteria for a class of nonlinear dispersive wave equations, J. Differntial Equations, 263 (2017) 5773--5786.

\bibitem{Novruzov2022} E. Novruzov, Blow-up criteria for a two-component nonlinear dispersive wave system,  J. Funct. Anal., 282 (2022) 109454.

\bibitem{Olver1996} P. Olver and P. Rosenau, Tri-Hamiltonian duality between solitons and solitary-wave solutions having compact support, Phys. Rev. E, 53 (1996) 1900--1906.

\bibitem{Wahlen2006} E. Wahl\'en, Global existence of weak solutions to the Camassa-Holm equations, Int. Math. Res. Not. IMRN, (2006) 1--12.

\bibitem{Zhang2023} D. Zhang, Y. Zhou, S. Ji and X. Li, On the Cauchy problem for a weakly dissipative coupled Camassa-Holm system, Monatsh. Math., 202 (2023) 857--873.

\bibitem{Zhou2022} Y. Zhou and S. Ji, Well-posedness and wave breaking for a class of integrable equations, Math. Z., 302 (2022) 1527--1550.

\bibitem{Zhou2024} Y. Zhou, S. Ji and Z. Qiao, Globally conservative weak solutions for a class of nonlinear dispersive wave equations beyond wave breaking, J. Differential Equations, 389 (2024) 338--360.
\end{thebibliography}



\section*{References}

\end{document}